\documentclass[12pt,a4paper,oneside,titlepage]{article}
\linespread{1.5} 
\usepackage{amsmath,amsfonts,amssymb,amsthm,amscd,verbatim,hyperref}
\theoremstyle{plain}
\newtheorem{Theorem}{Theorem}[section]

\newtheorem{Lemma}[Theorem]{Lemma}
\newtheorem{Corollary}[Theorem]{Corollary}

\theoremstyle{definition}

\newtheorem{Remark}[Theorem]{Remark}
\newtheorem{Example}[Theorem]{Example}

\newcommand{\R}{\mathbb R}
\newcommand{\Q}{\mathbb Q}
\newcommand{\Z}{\mathbb Z}
\newcommand{\N}{\mathbb N}

\newcommand{\alaplus}{\genfrac{}{}{0pt}{}{}{+}}
\newcommand{\aladots}{\genfrac{}{}{0pt}{}{}{\cdots}}
\newcommand{\K}{\mathop{\mathchoice{\doK\LARGE}{\doK\Large}{\doK\small}{\doK\small}}}
\newcommand{\doK}[1]{\vcenter{#1\kern.2ex\hbox{\normalfont\text{K}}\kern.2ex}}

\begin{document}

\title{On irrationality exponents of generalized continued fractions}
\author{Jaroslav Han\v cl \footnote{The work of Jaroslav Han\v cl was supported by the European Regional Development Fund in the IT4Innovations Centre of Excellence project (CZ.1.05/1.1.00/02.0070) and by grant 
no. P201/12/2351.},  Kalle Lepp\"al\"a, Tapani Matala-aho and Topi T\"orm\"a
\footnote{The work of all authors was supported by the Academy of Finland, grant 138522.}}
\maketitle
\date

\begin{abstract}
We study how the asymptotic irrationality exponent of a given generalized continued fraction 
\[
\K_{n=1}^\infty \frac{a_n}{b_n}\,,\quad a_n, b_n\in \mathbb{Z}^+,
\]
behaves as a function of growth properties of partial coefficient sequences $(a_n)$ and $(b_n)$.
\end{abstract}

\section{Introduction}

Our target is to present upper bounds for asymptotic irrationality exponents of
generalized continued fractions with positive integer partial coefficients.

By the concept of irrationality exponent of a real number $\tau$ we mean any exponent $\mu$ for which
there exist positive constants $c$ and $N_0$ such that
\begin{equation}\label{mudefinition}
\left| \tau-\frac{M}{N} \right| \ge \frac{c}{N^\mu}
\end{equation}
holds for all $M, N \in \Z$, $N \ge N_0$. 
The asymptotic irrationality exponent $\mu_I(\tau)$ is then the infimum of all such exponents $\mu$. 
Here the infimum of an empty set is interpreted as $\infty$,
in which case $\tau$ is called a Liouville number. For irrational $\tau$ we have $\mu_I(\tau) \ge 2$ by Dirichlet's theorem on Diophantine approximation and for rational $\tau$ holds $\mu_I(\tau)=1$.
Any function (not necessarily a power function) bounding $\left| \tau-\frac{M}{N}\right|$ from below for big enough $N$ is in turn called an irrationality measure of $\tau$.

By generalized continued fraction we mean the expression
\begin{equation}\label{GC}
b_0+\K_{n=1}^{\infty} \frac{a_n}{b_n} = b_0+\frac{a_1}{b_1+\dfrac{a_2}{b_2+\cdots}}\,,
\end{equation}
as well as the value of the limit when it exists.
The numbers $a_n$ and  $b_n$ are called the partial numerators and denominators, respectively,
together referred as the partial coefficients.
In this paper we assume that the partial coefficients are positive integers
bounded either by constants, by polynomials in $n$ or exponentially in $n$.

As far as we know there doesn't exist a systematic approach for studying the irrationality
exponents of generalized continued fractions. The purpose of this work is to (partially) solve this problem 
as an analogy to the approach in \cite{hanleilepmat} devoted to simple continued fractions.

Contrary to the case of simple continued fractions, generalized continued fractions are not always irrational and in fact don't even necessarily converge.
There exist numerous criteria for detecting the convergence of a generalized continued fraction, 
see Lorentzen and Waadeland \cite{lorentzen} and Perron \cite{perron}.
For irrationality criterion we mention Tietze's Criterion (see Perron \cite{perron}, s.242, Satz 14)
which tells that if $1\le a_n \le b_n$, $a_n,b_n \in \Z^+$,
then the continued fraction \eqref{GC} converges to an irrational number.
Bowman and Mc Laughlin \cite{bowman} considered generalized continued fractions 
that have values of polynomials as partial coefficients, with particular attention given to cases 
in which the degrees of the polynomials are equal.
As an example of a rational limit from \cite{bowman} we give
\begin{equation}\label{rationalexample}
\K_{n=1}^\infty\frac{6n^7 + 6n^6 + 2n^5 + 3n + 2}{6n^7 - 6n^6 + 2n^5 + 3n- 5}=\frac{19}{7}\,.
\end{equation}
However, in \cite{bowman} no irrationality measure results are given.

We only know few papers with irrationality measure results for ge\-ne\-ra\-li\-zed continued fractions. 
In \cite{shiokawa} Shiokawa applied a generalized continued fraction expansion of the exponential function
in studying irrationality measures of the exponential function in non-zero rational points.
For $\pi$, none of the several generalized continued fraction expansions is suitable for pro\-ving irrationality measure results.
The reason is that the growth rate of the partial numerators is too high compared to the growth rate of the partial denominators. 
This is also the case for most of the other mathematical constants. 

We start by proving Lemma \ref{INVARIANT} which shows that any irrationality exponent is invariant under linear fractional transformations. 
This means that in our purposes it is enough to consider the tails of generalized continued fractions.
Then we present a general result Lemma \ref{Theorem3.1} which gives an upper bound for the asymptotic irrationality exponent when we know some growth pro\-per\-ties of the numerator and denominator sequences of the convergents.
Lemmas \ref{INVARIANT} and \ref{Theorem3.1} are variations of generally known methods, 
see for example \cite{feldman} and \cite{steuding}.

We continue by applying Lemma \ref{Theorem3.1} in three cases.

In the first case the sequences of partial coefficients are bounded by constants.
In Theorem \ref{Theorem3.2} we bound the asymptotic irrationality exponent with a function of these constants, provided that they satisfy certain condition \eqref{gamma1condition}.
We show that condition \eqref{gamma1condition} is not unnecessary: 
without such restriction we can construct generalized continued fractions with arbitrarily high asymptotic irrationality exponents (or even Liouville numbers) using only values $1$ and $2$.
As examples in the constant category we study numbers connected to Thue-Morse and Fibonacci binary sequences.
These sequences have attracted a great deal of attention recently, see e.g. \cite{adabug2007} and \cite{adabug2005}.

In the second case the sequences of partial coefficients are bounded by polynomials. 
As an example we study a generalization of the simple continued fraction
\begin{equation}\label{bundcon}
\tau=[0,c_1+d_11^m,\ldots,c_s+d_s1^m,c_1+d_12^m,\ldots,c_s+d_s2^m,\ldots]\,,
\end{equation}
where $m,s,c_i,d_i\in\Z^+$ for $i=1,2,\ldots,s$, which has been under intensive investigations 
from the times of Euler \cite{euler}, see Lehmer \cite{lehmer1973} and Bundschuh \cite{bundschuh1998}
for references. We relax the conditions by allowing the numbers $c_i$ and $d_i$ to be positive rationals
and then use a certain equivalence transformation \eqref{equivtransform} to express 
$\tau$ in (\ref{bundcon}) as a generalized continued fraction and consequently to prove that
its asymptotic irrationality exponent is $2$.

In our final case the sequences of partial coefficients are bounded by exponential functions. 
As examples we consider certain $q$-fractions, including the Rogers-Ramanujan continued fraction and Tasoev's 
continued fractions.
Again, our results generalize or reproduce some earlier results, see e.g. Komatsu \cite{komatsu2003} and \cite{komatsu2005}. 
Further, we may study certain continued fractions generated by linear recurrences.

\section{Notations and preliminaries} 

Let $(a_n)$ and $(b_n)$ be sequences of positive integers (complex numbers in a more general definition).
The generalized continued fraction
\[
\tau = b_0+\frac{a_1}{b_1+\dfrac{a_2}{b_2+\cdots}} = b_0 + \K_{n=1}^\infty\frac{a_n}{b_n}=b_0+\frac{a_1}{b_1}\alaplus\frac{a_2}{b_2}\alaplus\aladots
\]
is defined as the limit of the $n$:th convergent
\[
\frac{A_n}{B_n}
=b_0+\K_{k=1}^n\frac{a_k}{b_k}=b_0+\frac{a_1}{b_1}\alaplus\frac{a_2}{b_2}\alaplus\aladots\alaplus\frac{a_n}{b_n}
\]
as $n$ tends to infinity. This limit does not necessarily exist, but does so in all the instances we study in this work.
The numerators $A_n$ and denominators $B_n$ of the convergents are integers satisfying the recurrence relations
\[
A_{n+2}=b_{n+2}A_{n+1}+a_{n+2}A_n \,,\qquad 
B_{n+2}=b_{n+2}B_{n+1}+a_{n+2}B_n
\]
with initial values $A_0=b_0$, $B_0=1$, $A_1=b_0b_1+a_1$ and $B_1=b_1$. 
By the $n$:th tail we mean the continued fraction
$\K_{k=n}^\infty \frac{a_k}{b_k}$.
From now on we will use the short-hand notations
$\Pi_n=\prod_{k=1}^{n}a_k$ and $R_n=\lvert B_n\tau-A_n \rvert$.

By the recurrence relations we get
\[
\frac{A_{k+1}}{B_{k+1}}-\frac{A_{k}}{B_{k}}=
\frac{(-1)^k\Pi_{k+1}}{B_{k}B_{k+1}}
\]
for all $k \in \N$ and telescoping this identity gives
\[
\frac{A_{n}}{B_{n}}=b_0+\sum_{k=0}^{n-1}\frac{(-1)^k\Pi_{k+1}}{B_{k}B_{k+1}}\,.
\]
Thereafter supposing, say
\begin{equation}\label{supposing}
\frac{\Pi_{k+1}}{B_{k}B_{k+1}} \longrightarrow 0\,,
\end{equation}
we have a limit
\[
\tau:=b_0+\sum_{k=0}^\infty\frac{(-1)^k\Pi_{k+1}}{B_{k}B_{k+1}}
\]
and consequently
\[
\tau-\frac{A_{n}}{B_{n}}=\sum_{k=n}^\infty\frac{(-1)^k\Pi_{k+1}}{B_{k}B_{k+1}}.
\]
Now, since the partial coefficients were assumed to be positive integers, we get
\[
\frac{\Pi_{n+1}}{B_{n}B_{n+1}}-\frac{\Pi_{n+2}}{B_{n+1}B_{n+2}}
<\left|\tau-\frac{A_{n}}{B_{n}}\right|
< \frac{\Pi_{n+1}}{B_{n}B_{n+1}}
\]
by the properties of alternating series.
Hence we have 
\begin{equation}\label{errorbound}
0<\frac{b_{n+2}\Pi_{n+1}}{B_{n+2}} < R_n < \frac{\Pi_{n+1}}{B_{n+1}}
\end{equation}
which is fundamental for our further studies. The above use of alternating series technique
is greatly inspired by Alf van der Poorten's lectures in \cite{alfvander}.

Sometimes we need different representations for the continued fraction under consideration. 
Then we may use for example the transformation
\begin{equation}\label{equivtransform}
b_0+\frac{a_1}{b_1} \alaplus \frac{a_2}{b_2} \alaplus \frac{a_3}{b_3}\alaplus \aladots=b_0+\frac{e_1a_1}{e_1b_1} \alaplus \frac{e_1e_2a_2}{e_2b_2} \alaplus \frac{e_2e_3a_3}{e_3b_3}\alaplus \aladots
\end{equation}
from \cite{perron}, provided that $e_n\ne 0$ for all $n \in \Z^+$ (this relation does not require $(a_n)$ and $(b_n)$ to be integer sequences).

\begin{Lemma}\label{INVARIANT}
Let $\tau \in \R$ and $q,t \in \Z \setminus \{0\}$, $r \in \Z$.
If there exist positive numbers $c_1$, $H_1$ and $\omega$, not depending on $N$, such that
\begin{equation}\label{LOWERBOUND1}
\lvert N\tau-M\rvert \ge \frac{c_1}{N^\omega}
\end{equation}
for all $M,N\in\Z$, $N \ge H_1$, then there exist positive numbers 
$c_i=c_i(\tau,\omega,c_1)$ and $H_i=H_i(\tau,H_1)$, $i\in\{2,3\}$, such that
\begin{equation}\label{LOWERBOUND2}
\left|N\left(\frac{q}{t}\tau+\frac{r}{t}\right)-M\right| \ge \frac{c_2}{N^\omega}
\end{equation}
for all $M,N\in\Z$, $N\ge H_2$ and
\begin{equation}\label{LOWERBOUND3}
\left|N\frac{1}{\tau}-M\right| \ge \frac{c_3}{N^\omega}
\end{equation}
for all $M,N\in\Z$, $N\ge H_3$.
\end{Lemma}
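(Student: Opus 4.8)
The plan is to reduce both inequalities directly to the hypothesis \eqref{LOWERBOUND1} by rewriting the quantity to be bounded in the form $\lvert N'\tau-M'\rvert$ with $N',M'\in\Z$ and then tracking how this change of variables affects the estimate. Before starting I would record two elementary remarks. First, \eqref{LOWERBOUND1} is unchanged under $(M,N)\mapsto(-M,-N)$, so it in fact yields $\lvert N\tau-M\rvert\ge c_1/\lvert N\rvert^\omega$ for every $M\in\Z$ and every $N\in\Z$ with $\lvert N\rvert\ge H_1$; this ``symmetrized'' form is the one I will actually apply. Second, taking $M=0$ in \eqref{LOWERBOUND1} gives $\lvert N\rvert\lvert\tau\rvert\ge c_1/\lvert N\rvert^\omega>0$, so $\tau\ne0$ and $1/\tau$ is defined.

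For \eqref{LOWERBOUND2}, write $\frac{q}{t}\tau+\frac{r}{t}=\frac{q\tau+r}{t}$, so that for all $M,N\in\Z$
\[
\left|N\left(\frac{q}{t}\tau+\frac{r}{t}\right)-M\right|=\frac{1}{\lvert t\rvert}\bigl\lvert (Nq)\tau-(Mt-Nr)\bigr\rvert .
\]
Setting $N'=Nq$ and $M'=Mt-Nr\in\Z$, the bound $\lvert q\rvert\ge1$ gives $\lvert N'\rvert=\lvert q\rvert N\ge N\ge H_1$ whenever $N\ge H_1$, so the symmetrized hypothesis yields $\lvert N'\tau-M'\rvert\ge c_1/\lvert N'\rvert^\omega=c_1/(\lvert q\rvert N)^\omega$. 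Hence \eqref{LOWERBOUND2} holds with $c_2=c_1/(\lvert t\rvert\,\lvert q\rvert^\omega)$ and $H_2=H_1$; this step is just a substitution.

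For \eqref{LOWERBOUND3} the natural first move is to write $\frac{N}{\tau}-M=\frac{N-M\tau}{\tau}$, so that $\lvert\frac{N}{\tau}-M\rvert=\lvert M\tau-N\rvert/\lvert\tau\rvert$, and to apply \eqref{LOWERBOUND1} with the roles of the two integers interchanged. The difficulty is that this only controls $\lvert M\tau-N\rvert$ when $\lvert M\rvert$ is large, whereas we want a bound in terms of $N$; I would therefore split into cases. If $\lvert\frac{N}{\tau}-M\rvert>1$ there is nothing to prove once $N\ge1$ and $c_3\le1$. If $\lvert\frac{N}{\tau}-M\rvert\le1$, then $\lvert M\rvert\le N/\lvert\tau\rvert+1\le(1+1/\lvert\tau\rvert)N$ for $N\ge1$; in the subcase $\lvert M\rvert\ge\max(H_1,1)$ the symmetrized hypothesis gives $\lvert M\tau-N\rvert\ge c_1/\lvert M\rvert^\omega\ge c_1/\bigl((1+1/\lvert\tau\rvert)N\bigr)^\omega$, while in the subcase $\lvert M\rvert<\max(H_1,1)$ the triangle inequality gives $\lvert\frac{N}{\tau}-M\rvert\ge N/\lvert\tau\rvert-\max(H_1,1)$, which exceeds $c_3/N^\omega$ as soon as $N\ge\lvert\tau\rvert(\max(H_1,1)+1)$. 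Collecting the cases yields $c_3=\min\{1,\,c_1/(\lvert\tau\rvert(1+1/\lvert\tau\rvert)^\omega)\}$ and $H_3=\max\{1,\,\lvert\tau\rvert(\max(H_1,1)+1)\}$.

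I expect the only real obstacle to be the bookkeeping in the last case of \eqref{LOWERBOUND3}: the finitely many ``small $\lvert M\rvert$'' possibilities must be absorbed by enlarging $H_3$ rather than by shrinking $c_3$, since shrinking $c_3$ would make it depend on quantities it is not allowed to depend on, and one then has to confirm that the resulting $c_3$ depends only on $\tau,\omega,c_1$ and $H_3$ only on $\tau,H_1$. Everything else is a direct substitution into \eqref{LOWERBOUND1}.
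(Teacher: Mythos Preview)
Your proof is correct and follows essentially the same route as the paper: both parts are handled by rewriting the expression as $\lvert N'\tau-M'\rvert$ and invoking \eqref{LOWERBOUND1}, and for \eqref{LOWERBOUND3} both arguments restrict to $\lvert N/\tau-M\rvert\le1$ and compare $\lvert M\rvert$ with $N/\lvert\tau\rvert$. The only cosmetic difference is that the paper observes directly that $N\ge\lvert\tau\rvert(H_1+1)$ together with $\lvert N/\tau-M\rvert\le1$ already forces $\lvert M\rvert\ge H_1$, so your ``small $\lvert M\rvert$'' subcase is in fact vacuous and can be omitted.
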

\begin{proof} 
Using \eqref{LOWERBOUND1} we get
\[
\left|N\left(\frac{q}{t}\tau+\frac{r}{t}\right)-M\right|=\frac{1}{\lvert t\rvert}\left|(Nq)\tau-(tM-rN)\right|
\ge \frac{c_1}{\lvert t\rvert\lvert q\rvert^\omega N^\omega} \,,
\]
when $N\lvert q\rvert\ge H_1$. This proves \eqref{LOWERBOUND2} with $c_2=\dfrac{c_1}{\lvert t\rvert\lvert q\rvert^\omega}$, 
$H_2=\dfrac{H_1}{\lvert q\rvert}$.
We move on to proving \eqref{LOWERBOUND3}.
Again by using the estimate \eqref{LOWERBOUND1} we get
\[
\left|N\frac{1}{\tau}-M\right|=
\frac{1}{\lvert\tau\rvert}\lvert M\tau-N\rvert
\ge \frac{c_1}{\lvert\tau\rvert\lvert M\rvert^\omega}\,,
\]
when $\lvert M\rvert\ge H_1$. It is enough to restrict to the case
\begin{equation}\label{RESTRICT}
\left|N\frac{1}{\tau}-M\right|\le 1 \qquad \Rightarrow \qquad
N\frac{1}{\lvert\tau\rvert}-1\le\lvert M\rvert\le N\frac{1}{\lvert \tau\rvert}+1\,.
\end{equation}
We require
\[
N\frac{1}{\lvert \tau\rvert}-1\ge H_1\,, 
\]
which quarantees that $\lvert M \rvert \ge H_1$.
Now \eqref{RESTRICT} implies
\[
\lvert M\rvert^\omega \le \left(N\frac{1}{\lvert\tau\rvert}+1\right)^\omega \le
N^\omega\left(\frac{1}{\lvert\tau\rvert}+1\right)^\omega\,.
\]
Thus
\[
\left|N\frac{1}{\tau}-M\right| \ge
\frac{c_1}{\lvert\tau\rvert\left(\frac{1}{\lvert\tau\rvert}+1\right)^\omega N^\omega} \,,
\]
and so we may put $c_3=\dfrac{c_1}{\lvert\tau\rvert\left(\frac{1}{\lvert\tau\rvert}+1\right)^\omega}$ and 
$H_3=\lvert\tau\rvert(H_1+1)$.
\end{proof}

\begin{Remark}
Lemma \ref{INVARIANT} tells that an irrationality exponent is invariant under
any fractional linear transformation, namely
\begin{equation}\label{FLTR}
\mu_I\left(\frac{r\tau+s}{t\tau+v}\right)=\mu_I(\tau)
\end{equation}
whenever $r,t,s,v\in\Z$ and \, $\displaystyle\frac{r\tau+s}{t\tau+v}\notin\Q$. The invariance (\ref{FLTR}) follows from 
\begin{equation}\label{}
\frac{r\tau+s}{t\tau+v}=\frac{st-rv}{t(t\tau+v)}+\frac{r}{t}
\end{equation}
with Lemma \ref{INVARIANT}. Consequently, the asymptotic irrationality exponent is also invariant under the linear fractional transformations. It is thus enough to consider the tails of generalized continued fractions when determining the asymptotic irrationality exponent. 
\end{Remark}

The following lemma is what we use through the paper for
deducing an upper bound for the asymptotic irrationality exponent from growth properties of the partial coefficients.
It is merely an adaptation of a well-known method in Diophantine approximation to the context of generalized continued fractions.
\begin{Lemma} \label{Theorem3.1}
Let $(a_n)$ and $(b_n)$ be sequences of positive integers and let \linebreak
$\tau=\K_{n=1}^\infty\frac{a_n}{b_n}$.
Suppose
\[
\lim_{n\to\infty}\frac {\log B_{n+1}}{\log B_n}=1
\] 
and
\begin{equation}\label{LIMSUPnu}
\limsup_{n\to \infty}\frac{\log \Pi_{n}}{\log B_{n}} \leq \nu<1\,.
\end{equation}
Then $\tau$ is irrational and 
\begin{equation}\label{ASYPIRRMEASURE}
\mu_I(\tau) \le 2+\frac{\nu}{1-\nu}\,.
\end{equation}
\end{Lemma}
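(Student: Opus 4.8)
The plan is to combine the fundamental two-sided estimate \eqref{errorbound} for $R_n=\lvert B_n\tau-A_n\rvert$ with the two hypotheses in order to pin down, for any target $N$, a convergent denominator $B_n$ that is comparable to $N$ and a good rational approximation $A_n/B_n$. First I would extract from \eqref{errorbound} an upper bound on $R_n$: since $\Pi_{n+1}=a_{n+1}\Pi_n$ and $B_{n+1}=b_{n+1}B_n+a_{n+1}B_{n-1}$, one gets $R_n<\Pi_{n+1}/B_{n+1}$, and I would like to say that for $n$ large this is at most $B_n^{-\delta}$ for some $\delta>0$. This is where \eqref{LIMSUPnu} enters: $\log\Pi_{n+1}\le(\nu+\varepsilon)\log B_{n+1}$ eventually, while $\log B_{n+1}/\log B_n\to 1$ lets me replace $B_{n+1}$ by $B_n$ up to an arbitrarily small power, so $R_n\le B_n^{-(1-\nu-\varepsilon')}$ for all large $n$. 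The lower bound in \eqref{errorbound} simultaneously shows $R_n>0$, hence $\tau\ne A_n/B_n$; together with $R_n\to 0$ (which follows from the same estimate, as $B_n\to\infty$) this already yields irrationality of $\tau$.

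Next I would run the standard ``three-distance''-style argument. Fix $N$ large and choose $n$ minimal with $B_{n+1}>2N$ (say); then $B_n\le 2N$, and because $\log B_{n+1}/\log B_n\to 1$ we also have $B_n\ge N^{1-\varepsilon}$ for $N$ large, so $B_n$ and $N$ are polynomially comparable. Given any $M\in\Z$, consider the integer $NA_n-MB_n$. If it is nonzero then $\lvert NA_n-MB_n\rvert\ge 1$, and writing
\[
1\le\lvert NA_n-MB_n\rvert\le N\lvert B_n\tau-A_n\rvert\cdot\frac{1}{B_n}\cdot B_n + B_n\lvert N\tau-M\rvert
\]
— more precisely $\lvert NA_n-MB_n\rvert=\lvert N(A_n-B_n\tau)+B_n(N\tau-M)\rvert\le N R_n+B_n\lvert N\tau-M\rvert$ — gives $B_n\lvert N\tau-M\rvert\ge 1-NR_n$. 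Using $R_n\le B_n^{-(1-\nu-\varepsilon')}\le (N^{1-\varepsilon})^{-(1-\nu-\varepsilon')}$ and $N$ large, the term $NR_n$ is bounded by, say, $1/2$ once $1-(1-\nu)<0$... here one must be a touch careful: $NR_n\le N\cdot N^{-(1-\varepsilon)(1-\nu-\varepsilon')}$, and since $\nu<1$ the exponent $1-(1-\varepsilon)(1-\nu-\varepsilon')$ can be made negative by choosing $\varepsilon,\varepsilon'$ small, so $NR_n\to 0$. Hence $B_n\lvert N\tau-M\rvert\ge 1/2$, i.e. $\lvert N\tau-M\rvert\ge 1/(2B_n)\ge 1/(4N)$, which is far stronger than needed. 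The only remaining case is $NA_n-MB_n=0$, i.e. $M/N=A_n/B_n$; then $\lvert N\tau-M\rvert=N R_n/B_n\ge (b_{n+2}\Pi_{n+1})/(B_{n+2})\cdot N/B_n$ by the lower half of \eqref{errorbound}, and I bound $B_{n+2}$ from above by a small power of $B_n$ (again via $\log B_{n+1}/\log B_n\to1$, applied twice) and $\Pi_{n+1}\ge 1$, giving $\lvert N\tau-M\rvert\ge N\cdot B_n^{-1-\eta}\ge c N^{1-(1+\eta)(1+\varepsilon)}\cdot$, and since $B_n\le 2N$ this is $\ge cN^{-1-\eta'}$.

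Collecting the two cases, for every $\varepsilon>0$ there is $c>0$ with $\lvert N\tau-M\rvert\ge c N^{-(1+\nu/(1-\nu))-\varepsilon}$ for all large $N$, so dividing by $N$ gives $\lvert\tau-M/N\rvert\ge c N^{-(2+\nu/(1-\nu))-\varepsilon}$, whence $\mu_I(\tau)\le 2+\nu/(1-\nu)$ as claimed. The main obstacle is bookkeeping the $\varepsilon$'s in the bad case $M/N=A_n/B_n$: one needs the lower bound in \eqref{errorbound} to beat the loss incurred when trading $B_{n+2}$ and $N$ for powers of $B_n$, and to see that the worst exponent one can get is exactly $1+\nu/(1-\nu)$ — this comes from balancing ``$R_n$ small'' (which wants $B_n$ large relative to $N$) against ``$B_n\le N$'' in the formula $\lvert\tau-A_n/B_n\rvert\asymp \Pi_{n+1}/(B_nB_{n+1})$, and using $\Pi_{n+1}\le B_{n+1}^{\nu+\varepsilon}$ together with $B_{n+1}=B_n^{1+o(1)}$ to get the exponent $2-(1+\nu)+o(1)=1-\nu+o(1)$ in the denominator power, which inverts to the stated bound. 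I would therefore organize the write-up so that the key inequality $R_n/B_n\le B_n^{-(2-\nu)+\varepsilon}$ and its reverse $R_n/B_n\ge B_n^{-(2-\nu)-\varepsilon}$ are isolated as a preliminary computation before the approximation argument.
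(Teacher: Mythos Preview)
Your argument contains a genuine gap in Case~1. With your choice of $n$ (minimal with $B_{n+1}>2N$, so $N^{1-\varepsilon}\le B_n\le 2N$) you assert that $NR_n\to 0$ because ``the exponent $1-(1-\varepsilon)(1-\nu-\varepsilon')$ can be made negative by choosing $\varepsilon,\varepsilon'$ small.'' Expanding that exponent gives
\[
1-(1-\varepsilon)(1-\nu-\varepsilon')=\nu+\varepsilon'+\varepsilon(1-\nu-\varepsilon')\,,
\]
which is close to $\nu$ for small $\varepsilon,\varepsilon'$ and hence \emph{positive} whenever $\nu>0$. So $NR_n$ grows like $N^{\nu}$ and the inequality $B_n\lvert N\tau-M\rvert\ge 1-NR_n$ gives nothing. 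Your Case~2 is fine, but Case~1 collapses for every $\nu\in(0,1)$, which is exactly the interesting range.

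The underlying reason is that you chose the index to make $B_n$ comparable to $N$, whereas what is needed is to make $R_n$ comparable to $1/N$. The paper does precisely this: it sets $k=\max\{j:R_j>1/(2b)\}$ (with $b$ playing the role of your $N$), so that $R_{k+1}\le 1/(2b)$ by construction and no limit argument is required to control $bR_l$. The price is that now $B_{k+1}$ may be much larger than $b$; one recovers the bound $B_{k+1}\le (2b)^{(1+\varepsilon)/(1-\nu-\varepsilon)}$ from $R_k>1/(2b)$ together with $R_k<\Pi_{k+1}/B_{k+1}\le B_{k+1}^{-(1-\nu-\varepsilon)}$. Finally, to avoid your Case~2 altogether, the paper uses the determinant identity $A_nB_{n+1}-A_{n+1}B_n=(-1)^{n+1}\Pi_{n+1}\ne 0$ to guarantee that at least one of $D_{k+1},D_{k+2}$ is nonzero, and works with that index $l\in\{k+1,k+2\}$. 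With this choice both ingredients---$bR_l\le\tfrac12$ and $B_l\le B_{k+2}\le(2b)^{(1+\varepsilon)/(1-\nu-\varepsilon)}$---are available simultaneously, and the exponent $2+\nu/(1-\nu)$ falls out directly. Your sketch can be repaired along these lines, but as written the core inequality in Case~1 is false.
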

\begin{proof}
First we note that the assumption (\ref{LIMSUPnu}) implies \eqref{supposing}
which in turn confirms the convergence of $\K_{n=1}^\infty\frac{a_n}{b_n}$.
Further, by \eqref{errorbound} and \eqref{LIMSUPnu} we have $0<R_n \to 0$ 
and so $\tau$ is irrational. Now we are ready to prove \eqref{ASYPIRRMEASURE}.
Let $\varepsilon>0$. For each $a \in \Z$, $b \in \Z^+$ denote $D_n=A_nb-B_na$. Since $\displaystyle\lim_{n\to\infty}R_n=0 $ we may define integers $k=\max\{j\in\Z^+\mid R_j > \frac{1}{2b}\}$ and $l=\min\{j\in\Z^+\mid R_j \le \frac{1}{2b} \text{ and } D_j\neq0\}$,
provided that $b$ is so big that $R_1 >\frac{1}{2b}$. Since
\[
A_nB_{n+1}-B_nA_{n+1}=(-1)^{n+1}\Pi_{n+1}\neq 0
\]
for all $n\in\Z^+$ we have $l=k+1$ or $l=k+2$.
Since $k\to\infty$ when $b\to\infty$ there exists $b_0\in\Z^+$ such that
\[
\frac{\log\Pi_{k+1}}{\log B_{k+1}}< \nu+\varepsilon
\]
and
\[
\frac{\log B_{k+2}}{\log B_{k+1}}< 1+\varepsilon
\]
whenever $b\ge b_0$. By \eqref{errorbound} now
\[
\frac{\log B_{k+2}}{-\log R_k}\le\frac{\log B_{k+2}}{\log B_{k+1}}\frac{\log B_{k+1}}{\log B_{k+1}-\log\Pi_{k+1}}<(1+\varepsilon)\frac{1}{1-(\nu+\varepsilon)}\,.
\]
Then
\[
1\le \lvert D_l\rvert=\lvert A_lb-B_la\rvert=\lvert B_l(b\tau-a)-(B_l\tau-A_l)b\rvert \le B_l\lvert b\tau-a\rvert +bR_l\,.
\]
By the choice of $l$ we have $bR_l \le\frac{1}{2}$ so $B_l\lvert b\tau-a\rvert >\frac{1}{2}$. Since $(B_n)$ is increasing we have
\[
B_l\le B_{k+2}<R_k^{-\frac{1+\varepsilon}{1-(\nu+\varepsilon)}}\leq (2b)^{\frac{1+\varepsilon}{1-(\nu+\varepsilon)}}\,.
\]
Now
\[
\left|\tau-\frac{a}{b}\right|=\frac{\lvert b\tau-a\rvert}{b}>\frac{1}{2bB_l}>\frac{1}{(2b)^{1+\frac{1+\varepsilon}{1-(\nu+\varepsilon)}}}
\]
for all $b\ge b_0$. Hence the asymptotic irrationality exponent of number $\tau$ is at most $1+\frac{1}{1-\nu}=2+\frac{\nu}{1-\nu}$.
\end{proof}

\section{Bounded partial coefficients} 

\begin{Theorem} \label{Theorem3.2}
Let $(a_n)$ and $(b_n)$ be sequences of positive integers and let
$\tau=\K_{n=1}^\infty\frac{a_n}{b_n}$.
Suppose
\[
\alpha_1\leq a_n\leq \alpha_2\,,\qquad\beta_1\leq b_n\leq\beta_2
\] 
for all $n\in\Z^+$, where
$\alpha_1$, $\alpha_2$, $\beta_1$ and $\beta_2$ are positive integers satisfying 
\begin{equation}\label{gamma1condition}
\gamma_1:=\frac{\beta_1+\sqrt{\beta_1^2+4\alpha_1}}{2}>\alpha_2\,.
\end{equation}  
Then $\tau$ is irrational and
\[
\mu_I(\tau)\le 2+\frac{\log\alpha_2}{\log\gamma_1-\log\alpha_2}\,.
\] 
\end{Theorem}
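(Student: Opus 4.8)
The plan is to verify the two hypotheses of Lemma \ref{Theorem3.1} with
\[
\nu=\frac{\log\alpha_2}{\log\gamma_1}\,,
\]
and then read off the conclusion.

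First I would pin down the growth of the denominators $B_n$. From the recurrence $B_{n+2}=b_{n+2}B_{n+1}+a_{n+2}B_n$ and the bounds on the partial coefficients we get $\beta_1 B_{n+1}+\alpha_1 B_n\le B_{n+2}$. Comparing $(B_n)$ with the solution $(y_n)$ of the linear recurrence $y_{n+2}=\beta_1 y_{n+1}+\alpha_1 y_n$ with $y_0=1=B_0$ and $y_1=\beta_1\le B_1$, an immediate induction gives $B_n\ge y_n$ for all $n$. The characteristic equation $x^2-\beta_1 x-\alpha_1=0$ has roots $\gamma_1=\frac{\beta_1+\sqrt{\beta_1^2+4\alpha_1}}{2}>0$ and $\gamma_2=\frac{\beta_1-\sqrt{\beta_1^2+4\alpha_1}}{2}<0$ with $\lvert\gamma_2\rvert<\gamma_1$ and $\gamma_1\ge\frac{1+\sqrt5}{2}>1$. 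Writing $y_n=c_+\gamma_1^n+c_-\gamma_2^n$ one checks $c_+=\frac{\beta_1-\gamma_2}{\gamma_1-\gamma_2}>0$, so $y_n/\gamma_1^n\to c_+>0$; since also $y_n>0$ for every $n$, there is a constant $c>0$ with $B_n\ge y_n\ge c\,\gamma_1^n$ for all $n$, hence
\[
\log B_n\ge n\log\gamma_1+\log c\,.
\]
Symmetrically, $B_{n+1}=b_{n+1}B_n+a_{n+1}B_{n-1}\le(\beta_2+\alpha_2)B_n$ shows that $\log B_{n+1}-\log B_n\le\log(\beta_2+\alpha_2)$ uniformly in $n$.

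Because $\gamma_1>1$ we have $B_n\to\infty$, so
\[
\frac{\log B_{n+1}}{\log B_n}=1+\frac{\log B_{n+1}-\log B_n}{\log B_n}\longrightarrow 1\,,
\]
which is the first hypothesis of Lemma \ref{Theorem3.1}. For the second, $\Pi_n=\prod_{k=1}^n a_k\le\alpha_2^{\,n}$ gives $\log\Pi_n\le n\log\alpha_2$, whence
\[
\frac{\log\Pi_n}{\log B_n}\le\frac{n\log\alpha_2}{n\log\gamma_1+\log c}\longrightarrow\frac{\log\alpha_2}{\log\gamma_1}=\nu\,,
\]
so $\limsup_{n\to\infty}\frac{\log\Pi_n}{\log B_n}\le\nu$. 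Condition \eqref{gamma1condition}, namely $\gamma_1>\alpha_2$, together with $\alpha_2\ge1$, forces $0\le\nu<1$. Lemma \ref{Theorem3.1} now yields that $\tau$ is irrational and $\mu_I(\tau)\le 2+\frac{\nu}{1-\nu}$; substituting $\nu=\frac{\log\alpha_2}{\log\gamma_1}$ and simplifying $\frac{\nu}{1-\nu}=\frac{\log\alpha_2}{\log\gamma_1-\log\alpha_2}$ gives exactly the asserted bound.

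The only point requiring a little care is the lower bound $B_n\ge c\,\gamma_1^n$: one must ensure that the coefficient $c_+$ of the dominant root in the comparison sequence $(y_n)$ is strictly positive, so that the subdominant, sign-oscillating term $c_-\gamma_2^n$ cannot push $y_n$ below a fixed multiple of $\gamma_1^n$. This is guaranteed by $\gamma_2<0<\beta_1=y_1$. Everything else is a routine limit computation feeding into Lemma \ref{Theorem3.1}.
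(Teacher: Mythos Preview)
Your argument is correct and follows essentially the same route as the paper: both verify the two hypotheses of Lemma \ref{Theorem3.1} by comparing $(B_n)$ from below with the linear recurrence $y_{n+2}=\beta_1 y_{n+1}+\alpha_1 y_n$, extracting $\log B_n\ge n\log\gamma_1+O(1)$, and bounding $\log\Pi_n\le n\log\alpha_2$. The only cosmetic difference is that the paper writes the comparison sequence explicitly as $\gamma_1^n\sum_{k=0}^n(\gamma_2/\gamma_1)^k$, while you invoke the general solution $c_+\gamma_1^n+c_-\gamma_2^n$ and check $c_+>0$; these are equivalent formulations of the same estimate.
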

\begin{proof}
Since $(B_n)$ is an increasing sequence, 
\begin{align*}
1&<\frac{\log B_{n+1}}{\log B_n}=\frac{\log (b_{n+1}B_n+a_{n+1}B_{n-1})}{\log B_n}<\frac{\log ((\beta_2+\alpha_2)B_n)}{\log B_n}\\
&=\frac{\log(\beta_2+\alpha_2)}{\log B_n}+1\longrightarrow 1
\end{align*}
so $\displaystyle\lim_{n\to\infty}\frac{\log B_{n+1}}{\log B_n}=1$.
From
\[
B_{n+2}=b_{n+2}B_{n+1}+a_{n+2}B_n \ge \beta_1B_{n+1}+\alpha_1 B_n\,,
\] 
we get by solving the recurrence $B'_{n+2}=\beta_1B'_{n+1}+\alpha_1 B'_n$, where $B'_0=1$ and $B'_1=\beta_1$, that
\[
B_n\geq B'_n=\gamma_1^n\sum_{k=0}^n\left(\frac{\gamma_2}{\gamma_1}\right)^k=\gamma_1^nS_n\,,
\]
where   
$\gamma_2=\frac{\beta_1-\sqrt{\beta_1^2+4\alpha_1}}{2}$ and $\underset{n\to\infty}{\lim}S_n=\sum_{k=0}^{\infty}\left(\frac{\gamma_2}{\gamma_1}\right)^k$ converges. 
Now we get
\begin{align*}
\frac{\log\Pi_n}{\log B_n}\le\frac{n\log\alpha_2}{n\log \gamma_1+\log S_n}\longrightarrow\frac{\log\alpha_2}{\log \gamma_1}
\end{align*}
when $n\to\infty$. Hence the result follows from Lemma \ref{Theorem3.1}.
\end{proof}

\begin{Remark}
Theorem \ref{Theorem3.2} restricted to simple continued fractions gives $\mu_I(\tau)=2$ since $\alpha_2=1$. 
This is of course consistent with the basics of simple continued fraction theory
which tells that $2$ is even an irrationality exponent of simple continued fractions with bounded partial denominators (the so-called badly approximable numbers).
\end{Remark}

\begin{Remark}
Some condition like \eqref{gamma1condition} in Theorem \ref{Theorem3.2} is certainly necessary as the following theorem shows.
\end{Remark}

\begin{Theorem}\label{Theorem3.5}
For any $s \in \{1\} \cup [2,\infty]$ there exists a generalized continued fraction $\tau_s=\K_{n=1}^\infty\frac{a_n}{1}$ with
$a_n \in \{1,2\}$ for all $n \in \Z^+$ and $\mu_I(\tau_s)=s$.
\end{Theorem}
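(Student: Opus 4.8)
The plan is to construct the continued fractions $\tau_s$ explicitly by choosing the sequence $(a_n)$ of $1$'s and $2$'s so as to control the growth of both $B_n$ and $\Pi_n=\prod a_k$, since by \eqref{errorbound} the quality of rational approximation $A_n/B_n$ is governed by $R_n$ with $b_{n+2}\Pi_{n+1}/B_{n+2}<R_n<\Pi_{n+1}/B_{n+1}$. With $b_n=1$ throughout, the recurrence $B_{n+2}=B_{n+1}+a_{n+2}B_n$ shows that if we use long blocks where $a_n=1$ the denominators grow like Fibonacci numbers (golden-ratio exponentially), whereas inserting occasional $a_n=2$ temporarily boosts $\Pi_n$. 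The three cases $s=1$, $s\in[2,\infty)$ and $s=\infty$ should be treated separately.

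For $s=1$: here $\tau_s$ must be rational, so I would just exhibit a periodic (or eventually periodic) sequence $(a_n)$ for which the continued fraction $\K a_n/1$ is known to converge to a rational, for instance using a purely periodic pattern and the fact that a periodic generalized continued fraction satisfies a quadratic equation over $\Q$ whose relevant root can be made rational by a suitable choice of period; alternatively, one can appeal to an example of Bowman–McLaughlin type. For $s\in[2,\infty)$: the idea is to take $a_n=1$ for all $n$ outside a sparse increasing sequence of indices $n_1<n_2<\cdots$, where $a_{n_j}=2$. Then $\log B_n\sim n\log\varphi$ with $\varphi=(1+\sqrt5)/2$ along most of the sequence, while $\log\Pi_n$ equals $(\log 2)\cdot\#\{j:n_j\le n\}$. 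Choosing the density of the boost-indices $n_j$ so that $\limsup \log\Pi_n/\log B_n = \nu$ for the prescribed $\nu$, Lemma \ref{Theorem3.1} gives $\mu_I(\tau_s)\le 2+\nu/(1-\nu)$; by tuning $\nu=(s-2)/(s-1)$ one gets the upper bound $\mu_I(\tau_s)\le s$. The matching lower bound $\mu_I(\tau_s)\ge s$ comes from the left inequality in \eqref{errorbound}: at the indices just before a long run of $1$'s, $R_n$ is small relative to $B_n$ because the next batch of $2$'s has not yet "paid off," giving infinitely many convergents with $\lvert\tau_s-A_n/B_n\rvert< c\,B_n^{-s}$; making this precise requires choosing the gaps $n_{j+1}-n_j$ to grow fast enough that the lower bound in \eqref{errorbound} is genuinely of order $B_n^{-s}$ along a subsequence. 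For $s=\infty$: take the gaps $n_{j+1}-n_j$ to grow so rapidly (e.g. super-exponentially) that $\liminf \log\Pi_n/(-\log R_n)$ forces $\lvert\tau-A_n/B_n\rvert$ to beat every power of $1/B_n$ infinitely often, producing a Liouville number; convergence still holds because eventually $a_n=1$ on arbitrarily long stretches keeps $\Pi_n/(B_nB_{n+1})\to 0$.

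The main obstacle will be the lower bound $\mu_I(\tau_s)\ge s$ and, relatedly, making the $\limsup$ in the upper bound genuinely equal to the target $\nu$ rather than merely $\le\nu$. For the lower bound one must verify that the good approximations $A_n/B_n$ occurring at the chosen subsequence are not "accidentally" dominated by even better rational approximations of comparable denominator; here the key point is that consecutive convergents satisfy $A_nB_{n+1}-B_nA_{n+1}=\pm\Pi_{n+1}$, so any rational $M/N$ with $N\le B_{n+1}$ approximating $\tau$ very well must essentially be $A_n/B_n$ itself, which pins $\mu_I$ from below. A secondary technical nuisance is that in the generalized (as opposed to simple) case $(a_n)$ need not be eventually constant for $s<\infty$, so one must check directly that $\lim \log B_{n+1}/\log B_n=1$ holds for the constructed sequences — but this follows since $b_n=1$, $a_n\le 2$ force $B_{n+1}\le 3B_n$, exactly as in the proof of Theorem \ref{Theorem3.2}. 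Finally one should confirm that each constructed $\tau_s$ is irrational when $s\ge 2$, which is immediate from $R_n\to 0$ as noted after \eqref{errorbound}, and that the $s=1$ construction genuinely yields a rational limit.
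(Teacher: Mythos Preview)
Your lower bound argument for $s>2$ has a genuine gap. You claim that ``at the indices just before a long run of $1$'s, $R_n$ is small relative to $B_n$'', but the left inequality in \eqref{errorbound} actually \emph{prevents} this: it says $R_n>\Pi_{n+1}/B_{n+2}\ge 1/B_{n+2}$, and since $b_n=1$, $a_n\le 2$ force $B_{n+2}\le 3B_{n+1}\le 9B_n$, every convergent of the generalized fraction satisfies
\[
\left|\tau_s-\frac{A_n}{B_n}\right|=\frac{R_n}{B_n}>\frac{1}{9B_n^{2}}\,.
\]
No choice of gaps $n_{j+1}-n_j$ can make this of order $B_n^{-s}$. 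Consequently your plan yields only $2\le\mu_I(\tau_s)\le s$, never equality. Note also the tension with your upper bound: applying Lemma~\ref{Theorem3.1} with $\nu=(s-2)/(s-1)$ requires keeping $\limsup\log\Pi_n/\log B_n$ strictly below~$1$, but the only way to get approximations beating $B_n^{-2}$ is through the mechanism you overlooked, namely that $\gcd(A_n,B_n)$ can be huge (it divides $\Pi_{n+1}$, a power of $2$), and building up that common factor requires long runs of $a_k=2$, which drive the $\limsup$ toward $1$.

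The paper's route is to bypass these difficulties by converting to a \emph{simple} continued fraction. It proves the identity
\[
\underbrace{\tfrac{2}{1}\alaplus\cdots\alaplus\tfrac{2}{1}}_{2k}\alaplus\tfrac{1}{1}\alaplus\tfrac{1}{1}\alaplus\tfrac{1}{1}\alaplus\tfrac{1}{1}\alaplus\underbrace{\tfrac{2}{1}\alaplus\cdots\alaplus\tfrac{2}{1}}_{2k}\alaplus\tfrac{x}{1}
=\tfrac{1}{1}\alaplus\tfrac{1}{(7\cdot4^{k}-4)/3}\alaplus\tfrac{1}{1}\alaplus\tfrac{1}{1}\alaplus\tfrac{x}{1}\,,
\]
so that symmetric blocks of $2$'s in the generalized fraction correspond to a single large partial quotient $\sim 4^k$ in a simple one. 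One then writes down the simple continued fraction $[0,c_1,c_2,\dots]$ with $c_n=1$ except $c_n\asymp B_{n-1}^{s-2}$ at $n\equiv 2\pmod 4$; classical theory gives $\mu_I=s$ immediately (both bounds), and the identity rewrites this $\tau_s$ as $\K a_n/1$ with $a_n\in\{1,2\}$. For $s=1$ the paper simply takes $\K_{n=1}^\infty\frac{2}{1}=1$, and for $s=\infty$ one replaces $s$ by $n$ in the definition of $c_n$ to obtain a Liouville number.
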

\begin{proof}
The case $s=1$ is dealt with by noting that $\K_{n=1}^\infty \frac{2}{1}=1$.\\
Next we calculate
\begin{align} 
\frac{2}{1} \alaplus \frac{2}{1} \alaplus \frac{1}{1} \alaplus
\frac{1}{c} \alaplus \frac{1}{1} \alaplus \frac{1}{1} \alaplus
\frac{2}{1} \alaplus \frac{2}{1} \alaplus \frac{x}{1}
& = \frac{(4c+5)x+8c+9}{(4c+6)x+8c+11} \nonumber \\
& = \frac{1}{1} \alaplus \frac{1}{4c+4} \alaplus \frac{1}{1} \alaplus
\frac{1}{1} \alaplus \frac{x}{1} \label{lasku}
\end{align}
for all positive integers $c$ and real numbers $x$. Then we show by induction that
\begin{align}
&\overbrace{\frac{2}{1} \alaplus \aladots \alaplus \frac{2}{1}}^{2k \text{ times}} \alaplus \frac{1}{1} \alaplus
\frac{1}{1} \alaplus \frac{1}{1} \alaplus \frac{1}{1} \alaplus
\overbrace{\frac{2}{1} \alaplus \aladots \alaplus \frac{2}{1}}^{2k \text{ times}}
\alaplus \frac{x}{1} \nonumber\\
=\,& \frac{1}{1} \alaplus \frac{1}{(7\cdot 4^{k}-4)/3} \alaplus \frac{1}{1} \alaplus \frac{1}{1}\alaplus \frac{x}{1}
\label{gentosimple}
\end{align}
for all $k\in\Z^+$ and real numbers $x$. The case $k=1$ follows immediately from \eqref{lasku} with $c=1$.
Let us assume that 
\begin{align}
&\overbrace{\frac{2}{1} \alaplus \aladots \alaplus \frac{2}{1}}^{2(k-1) \text{ times}} 
\alaplus \frac{1}{1} \alaplus 
\frac{1}{1} \alaplus \frac{1}{1} \alaplus \frac{1}{1} \alaplus
\overbrace{\frac{2}{1} \alaplus \aladots \alaplus \frac{2}{1}}^{2(k-1) \text{ times}}
 \alaplus \frac{x}{1} \nonumber\\
=\,& \frac{1}{1} \alaplus \frac{1}{(7\cdot 4^{k-1}-4)/3} \alaplus \frac{1}{1} \alaplus \frac{1}{1} \alaplus \frac{x}{1}
\label{io}
\end{align}
Assumption \eqref{io} holds especially when we substitute $\displaystyle \frac{x}{1}$ with 
$\displaystyle \frac{2}{1}\alaplus\frac{2}{1}\alaplus \frac{x}{1}$. Then we have
\begin{align*}
&\overbrace{\frac{2}{1} \alaplus \aladots \alaplus \frac{2}{1}}^{2k \text{ times}} 
\alaplus \frac{1}{1} \alaplus \frac{1}{1} \alaplus \frac{1}{1} \alaplus \frac{1}{1} \alaplus
\overbrace{\frac{2}{1} \alaplus \aladots \alaplus \frac{2}{1}}^{2k \text{ times}}
\alaplus \frac{x}{1}\\
=\,&\frac{2}{1}\alaplus\frac{2}{1}\alaplus \overbrace{\frac{2}{1} \alaplus \aladots \alaplus \frac{2}{1}}^{2(k-1) \text{ times}} \alaplus \frac{1}{1} \alaplus
\frac{1}{1} \alaplus \frac{1}{1} \alaplus \frac{1}{1} \alaplus
\overbrace{\frac{2}{1} \alaplus \aladots \alaplus \frac{2}{1}}^{2(k-1) \text{ times}}\alaplus\frac{2}{1}\alaplus\frac{2}{1}\alaplus\frac{x}{1}\\
=\,&\frac{2}{1}\alaplus\frac{2}{1}\alaplus \frac{1}{1} \alaplus \frac{1}{(7\cdot 4^{k-1}-4)/3} \alaplus \frac{1}{1} \alaplus
\frac{1}{1}\alaplus\frac{2}{1}\alaplus\frac{2}{1}\alaplus\frac{x}{1}\\
=\,& \frac{1}{1} \alaplus \frac{1}{4(7\cdot 4^{k-1}-4)/3+4} \alaplus \frac{1}{1} \alaplus
\frac{1}{1}\alaplus\frac{x}{1}=\frac{1}{1} \alaplus \frac{1}{(7\cdot 4^{k}-4)/3} \alaplus \frac{1}{1} \alaplus
\frac{1}{1}\alaplus\frac{x}{1} \,.  
\end{align*}
Hence (\ref{gentosimple}) holds for all real numbers $x$. \\
Now let $s \ge 2$ be a real number. 
Consider the simple continued fraction $\tau_s=[0,c_1,c_2,\ldots]$,
\begin{equation}\label{bnmaar}
c_n = 
\begin{cases}
1\,, \quad & \text{ when } n \not \equiv 2 \mod 4\,, \\
\frac{7\cdot4^{f(n)}-4}{3}\,, \quad & \text{ when } n \equiv 2 \mod 4\,,
\end{cases}
\end{equation}
where $f(n)$ is defined recursively as
\begin{equation}\label{fnmaar}
f(n)=\left\lceil \log_4\left(\frac{3B_{n-1}^{s-2}+4}{7}\right)\right\rceil\,.
\end{equation} 
By using formula \eqref{gentosimple} repeatedly we get 
\begin{align*}
\tau_s=&\,\frac{1}{1} \alaplus \frac{1}{(7\cdot 4^{f(2)}-4)/3} \alaplus \frac{1}{1} \alaplus \frac{1}{1} \alaplus \frac{1}{1} \alaplus \frac{1}{(7\cdot 4^{f(6)}-4)/3} \alaplus \frac{1}{1} \alaplus \frac{1}{1}\alaplus\aladots \\
=&\,\overbrace{\frac{2}{1} \alaplus \aladots \alaplus \frac{2}{1}}^{2f(2) \text{ times}} \alaplus \frac{1}{1} \alaplus \frac{1}{1} \alaplus \frac{1}{1} \alaplus \frac{1}{1} \alaplus \overbrace{\frac{2}{1} \alaplus \aladots \alaplus \frac{2}{1}}^{2f(2) \text{ times}}\\ 
&\qquad\alaplus \overbrace{\frac{2}{1} \alaplus \aladots \alaplus \frac{2}{1}}^{2f(6) \text{ times}} \alaplus \frac{1}{1} \alaplus \frac{1}{1} \alaplus \frac{1}{1} \alaplus \frac{1}{1} \alaplus \overbrace{\frac{2}{1} \alaplus \aladots \alaplus \frac{2}{1}}^{2f(6) \text{ times}}\alaplus \overbrace{\frac{2}{1} \alaplus \aladots \alaplus \frac{2}{1}}^{2f(10) \text{ times}}\alaplus\aladots\\
=&\,\K_{n=1}^\infty \frac{a_n}{1}
\end{align*}
with $a_n \in \{1,2\}$ for each $n \in \Z^+$.
From \eqref{bnmaar} and \eqref{fnmaar} we have $c_n\ge B_{n-1}^{s-2}$ for all $n\equiv 2 \mod 4$ ($B_{n-1}$ still refers to the denominator of the simple continued fraction $[0,c_1,c_2,\ldots]$ of $\tau_s$, not the generalized one). Hence
\[
\left|\tau_s-\frac{A_n}{B_n}\right| < \frac{1}{b_{n+1}B_n^2}\le \frac{1}{B_n^s}
\]
for all $n\equiv 1 \mod 4$ and thus $\mu_I(\tau_s) \ge s$. On the other hand, when $s>2$ \eqref{bnmaar} and \eqref{fnmaar} imply
$c_n<5B_{n-1}^{s-2}-2$ for large enough $n$ and then
\[
\left|\tau_s-\frac{A_n}{B_n}\right| > \frac{1}{(b_{n+1}+2)B_n^2}>\frac{1}{5B_n^s}\,.
\]
Only the convergents of the simple continued fraction expansion may violate \eqref{mudefinition} when $\mu \ge 2$ and $c\le \frac{1}{2}$, so we get $\mu_I(\tau_s) \le s$ also.
When we define $\tau_{\infty}$ by substituting $s$ with $n$ in \eqref{fnmaar}, we get
\[
\left|\tau_{\infty}-\frac{A_n}{B_n}\right|<\frac{1}{B_n^n}
\]
for all $n\equiv 1 \mod 4$ and thus $\tau_{\infty}$ is a Liouville number.
\end{proof}

\begin{Example}
With Lemma \ref{Theorem3.1} and Theorem \ref{Theorem3.2} we may consider for example some continued fractions generated by finite automata.
The celebrated Thue-Morse $\overline{t}=t_1t_2\ldots$, $t_i\in\{0,1\}$ and 
Fibonacci $\overline{f}=f_1f_2\ldots$, $f_i\in\{0,1\}$ 
infinite words are defined as fixed points of binary morphisms
\begin{align*}
&\sigma_t(0)=01 \,, \quad \sigma_t(1)=10 \,, \quad &\overline{t}&=\sigma_t^{\infty}(0)=01101001100 \ldots\,,\\
&\sigma_f(0)=01 \,, \quad \sigma_f(1)=0  \,, \quad &\overline{f}&=\sigma_f^{\infty}(0)=01001010010 \ldots\,,
\end{align*}
respectively. 
The infinite words $\overline{t}=t_1t_2\ldots$ and
$\overline{f}=f_1f_2\ldots$ generate the Thue-Morse $(t_n)=(0,1,1,0,\ldots)$ and 
Fibonacci  $(f_n)=(0,1,0,0,\ldots)$
binary sequences, which have attracted a great deal of attention in several contexts, see \cite{adabug2007} and \cite{adabug2005}.
We will study the following continued fractions
\[
\tau_t=\K_{n=1}^\infty \frac{2^{t_n}}{2^{t_n+1}}\,, \quad
\tau_f=\K_{n=1}^\infty \frac{2^{f_n}}{2^{f_n+1}}\,, \quad
\tau_{ft}=\K_{n=1}^\infty \frac{2^{f_n}}{2^{t_n}}\,.
\]
First we note, that by Theorem \ref{Theorem3.2} we get
\begin{equation}\label{badresults}
\mu_I(\tau_t),\ \mu_I(\tau_f) \le 2+\frac{\log 2}{\log(1+\sqrt{2})-\log 2}= 5.682\ldots\,,
\end{equation}
but $\mu_I(\tau_{ft})$ is out of reach. \\
However, when we know some density properties of the sequences $(a_n)$ and $(b_n)$
we may do better. It is known and quite straightforward to establish that 
\begin{equation}\label{thueden}
\lim_{n\to\infty} \frac{\#\{k\mid t_k=1,\,1\leq k\leq n\}}{n}=\frac{1}{2}
\end{equation}
and from \cite{lothaire} we have
\begin{equation}\label{fiboden}
\lim_{n\to\infty} \frac{\#\{k\mid f_k=1,\,1\leq k\leq n\}}{n}=\frac{1}{\varphi^2}\,,
\end{equation}
where $\varphi=\frac{1+\sqrt{5}}{2}$ is the golden ratio. With the help of these formulas a direct use of Lemma \ref{Theorem3.1} gives better results. \\
Let $\varepsilon>0$ be arbitrary. For big enough $n$ we now have
\[
\Pi_n \le \big(\sqrt{2}\,(1+\varepsilon)\big)^n
\]
by \eqref{thueden}.
From the estimate $B_{n+2}\ge 2B_{n+1}+B_n$ we deduce via solving the recurrence that $B_{n+1}\ge (1+\sqrt{2}-\varepsilon)B_n$ for all large $n$. Thus from some point on,
\[
B_n =2^{t_n}\left(2+\frac{1}{2^{t_{n-1}}\left( 2+\frac{B_{n-3}}{B_{n-2}}\right)}\right)B_{n-1} 
\ge 2^{t_n}\left(2+\frac{\sqrt{2}-1-\varepsilon}{2^{t_{n-1}}}\right) B_{n-1}\,.
\]
Repeating this $n$ times and using \eqref{thueden} gives the bound
\[
B_n \ge \big(\sqrt{5+4\sqrt{2}}\,(1-\varepsilon)\big)^n\,.
\]
Now we get to use Lemma \ref{Theorem3.1} with
\[
\frac{\log \Pi_n}{\log B_n} \le \frac{\log\sqrt{2}+\log(1+\varepsilon)}{\log\sqrt{5+4\sqrt{2}}+\log(1-\varepsilon)} 
\longrightarrow \frac{\log 2}{\log(5+4\sqrt{2})}
\]
implying
\[
\mu_I(\tau_t)\le 2+ \frac{\log{2}}{\log(5+4\sqrt{2})-\log{2}}= 2.414\ldots\,,
\]
which beats \eqref{badresults}. If we take in to account that the asymptotic proportions of the subwords $00$, $01$, $10$ and $11$ in the Thue-Morse word are $\frac{1}{6}$, $\frac{1}{3}$, $\frac{1}{3}$ and $\frac{1}{6}$, respectively, we can further improve this bound to $2.412\ldots$, and so on. \\
In the next case we use the same arguments with \eqref{fiboden} to bound
\[
\Pi_n \le \big(2^{\frac{1}{\varphi^2}}\,(1+\varepsilon)\big)^n
\]
and
\[
B_n \ge \big( (1+\sqrt{2})^{\frac{1}{\varphi}}(3+\sqrt{2})^{\frac{1}{\varphi^2}}\,(1-\varepsilon)\big)^n
\]
for all $\varepsilon>0$ and $n$ sufficiently large. With these bounds, Lemma \ref{Theorem3.1} implies
\[
\mu_I(\tau_f) \le 2+ \frac{\log 2}{\varphi\log(1+\sqrt 2)+\log(3+\sqrt 2)-\log 2}= 2.312\ldots\,,
\]
again beating \eqref{badresults}. Again, after noting that the asymptotic proportions of the subwords $00$, $01$ and $10$ of the Fibonacci word are $1-\frac{2}{\varphi^2}$, $\frac{1}{\varphi^2}$ and $\frac{1}{\varphi^2}$, respectively, (follows immediately from \eqref{fiboden} since the subword $11$ never occurs) we get an improvement to $2.311\ldots$, and so on. \\
In the final case we may simply estimate
\[
\Pi_n\le\big(2^{\frac{1}{\varphi^2}}\,(1+\varepsilon)\big)^n
\]
and
\[
B_n\ge \bigg(\sqrt{\frac{3+\sqrt{6}}{2}}\,(1-\varepsilon)\bigg)^n
\]
to get
\[
\mu_I(\tau_{ft}) \le 2+\frac{\log{4}}{\varphi^2(\log(3+\sqrt{6})-\log{2})-\log{4}}=3.119\ldots\,.
\]
\end{Example}

\section{Polynomial growth}

\begin{Theorem}\label{poly}
Let $(a_n)$ and $(b_n)$ be sequences of positive integers and let $\tau =\K_{n=1}^\infty\frac{a_n}{b_n}$. 
Suppose
\[
a_n \le \alpha n^l\,,\qquad \beta_1 n^{k_1}\le b_n \le \beta_2 n^{k_2}
\]
for all $n \in \Z^+$, where $l$ is a non-negative real number and $\beta_1$, $\beta_2$, $k_1$ and $k_2$ are positive real numbers satisfying
\begin{equation}\label{polycondition}
l<k_1 \leq k_2\,.
\end{equation}
Then $\tau$ is irrational and 
\[
\mu_I(\tau)\le 2+\frac{l}{k_1-l}\,.
\] 
\end{Theorem}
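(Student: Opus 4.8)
The plan is to reduce everything to Lemma \ref{Theorem3.1}: it suffices to establish $\lim_{n\to\infty}\frac{\log B_{n+1}}{\log B_n}=1$ and $\limsup_{n\to\infty}\frac{\log\Pi_n}{\log B_n}\le\nu$ for some $\nu<1$, after which the conclusion is immediate with $\nu=l/k_1$.

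First I would record two size estimates for $B_n$. From $B_{n+1}=b_{n+1}B_n+a_{n+1}B_{n-1}\ge b_{n+1}B_n$, iterating gives $B_n\ge\prod_{k=1}^n b_k\ge\prod_{k=1}^n\beta_1 k^{k_1}=\beta_1^{\,n}(n!)^{k_1}$, so $\log B_n\ge n\log\beta_1+k_1\log(n!)$; by Stirling this is asymptotic to $k_1 n\log n$, in particular $\log B_n\to\infty$. In the other direction, $(B_n)$ is nondecreasing (since $a_n,b_n\ge1$), so $B_{n+1}=b_{n+1}B_n+a_{n+1}B_{n-1}\le\big(\beta_2(n+1)^{k_2}+\alpha(n+1)^l\big)B_n\le(\beta_2+\alpha)(n+1)^{k_2}B_n$, using $l\le k_2$ from \eqref{polycondition}.

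Next, the first hypothesis of Lemma \ref{Theorem3.1}: from the last display,
\[
1<\frac{\log B_{n+1}}{\log B_n}\le 1+\frac{\log(\beta_2+\alpha)+k_2\log(n+1)}{\log B_n}\longrightarrow 1\,,
\]
because the numerator grows like $\log n$ while $\log B_n$ grows like $n\log n$. For the second hypothesis, $a_k\le\alpha k^l$ yields $\Pi_n\le\alpha^n(n!)^l$, hence $\log\Pi_n\le n\log\alpha+l\log(n!)$, and combining with the lower bound for $\log B_n$,
\[
\frac{\log\Pi_n}{\log B_n}\le\frac{n\log\alpha+l\log(n!)}{n\log\beta_1+k_1\log(n!)}\longrightarrow\frac{l}{k_1}
\]
(divide numerator and denominator by $\log(n!)\to\infty$ and use $n/\log(n!)\to0$). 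Thus $\limsup_{n\to\infty}\frac{\log\Pi_n}{\log B_n}\le\frac{l}{k_1}=:\nu$, and \eqref{polycondition} forces $l<k_1$, i.e.\ $\nu<1$. Lemma \ref{Theorem3.1} then gives the irrationality of $\tau$ together with $\mu_I(\tau)\le 2+\frac{\nu}{1-\nu}=2+\frac{l}{k_1-l}$.

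There is no serious obstacle here; the statement is essentially a computation feeding Lemma \ref{Theorem3.1}. The only points needing a little care are that the denominator $n\log\beta_1+k_1\log(n!)$ is eventually positive (it is, since $k_1\log(n!)$ dominates even when $\beta_1<1$), and the observation that the upper exponent $k_2$ enters only in verifying $\frac{\log B_{n+1}}{\log B_n}\to1$ and hence, as expected, does not appear in the final bound.
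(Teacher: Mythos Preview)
Your proposal is correct and follows essentially the same route as the paper: both derive the lower bound $B_n\ge\beta_1^{\,n}(n!)^{k_1}$ by iterating $B_n\ge b_nB_{n-1}$, use the upper bound $B_{n+1}\le(\beta_2+\alpha)(n+1)^{k_2}B_n$ to get $\log B_{n+1}/\log B_n\to1$, bound $\Pi_n\le\alpha^n(n!)^l$, and then feed $\nu=l/k_1$ into Lemma~\ref{Theorem3.1}. Your write-up is in fact slightly more careful than the paper's in flagging that the lower bound $n\log\beta_1+k_1\log(n!)$ is eventually positive even when $\beta_1<1$.
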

\begin{proof}
Since
\[
B_n>b_nB_{n-1}\ge\beta_1 n^{k_1}B_{n-1}>\beta_1^n(n!)^{k_1}
\]
we have
\begin{align*}
1&<\frac{\log B_{n+1}}{\log B_n}=\frac{\log (b_{n+1}B_n+a_{n+1}B_{n-1})}{\log B_n}<\frac{\log ((\beta_2n^{k_2}+\alpha n^l)B_n)}{\log B_n}\\
&<\frac{\log(\beta_2+\alpha)+k_2\log n}{n\log\beta_1+k_1\log n!}+1\longrightarrow 1
\end{align*}
so $\displaystyle\lim_{n\to\infty}\frac{\log B_{n+1}}{\log B_n}=1$. Further
\begin{align*}
\frac{\log\Pi_n}{\log B_n}&\le\frac{\log(\alpha^n(n!)^l}{\log(\beta_1^n(n!)^{k_1}}=\frac{n\log\alpha+l\log(n!)}{n\log\beta_1+
k_1\log(n!)}\longrightarrow\frac{l}{k_1}
\end{align*}
when $n\to\infty$. Hence the result follows from Lemma \ref{Theorem3.1}.
\end{proof}

\begin{Remark}
Note that whenever $p(x)$ is a polynomial of degree $d$ with a positive leading coefficient,
there exist positive constants $c_1$ and $c_2$ such that $c_1n^d \le p(n) \le c_2n^d$ for large $n$. So Theorem \ref{poly} is suitable for cases where partial coefficients are values of polynomials or bounded by them.
\end{Remark}

\begin{Remark}
Some condition like \eqref{polycondition} in Theorem \ref{poly} is again necessary as can be seen from 
example \eqref{rationalexample} of Bowman and Mc Laughlin.
\end{Remark}

\begin{Example} Let us consider the exponential function in rational points. Let $x\in\Z\setminus\{0\}$ and $y\in\Z^+$. Then $e^{\frac{x}{y}}$ has continued fraction expansion (see for example \cite{lorentzen})
\[
e^{\frac{x}{y}}=1+\frac{2x}{2y-x}\alaplus\frac{x^2}{6y}\alaplus\frac{x^2}{10y}\alaplus\frac{x^2}{14y}\alaplus\aladots=1+\cfrac{2x}{2y-x+\tau}\,.
\]
Due to Lemma \ref{INVARIANT} we may consider only the tail part $\tau$.
Now with $l=0$ and $k_1=k_2=1$ Theorem \ref{poly} gives us $\mu_I(\tau)\le 2$ and hence $\mu_I(e^{\frac{x}{y}})=2$. This result has been previously obtained by for example Shiokawa in \cite{shiokawa}.
\end{Example}

\begin{Example} 
Next we shall consider generalizations of the type of simple continued fractions studied by Bundschuh in \cite{bundschuh1998}. 
Let
\[
\tau = \K _{n=1}^\infty \frac{1}{c_n+d_n\left\lceil\frac{n}{s}\right\rceil^m}
\]
with
\[
c_n = \frac{t_{\overline{n}}}{u_{\overline{n}}} \quad \text {and} \quad
d_n = \frac{v_{\overline{n}}}{w_{\overline{n}}}\,,\quad \text{when}\quad n \equiv \overline{n} \mod s\,,\quad 0<\overline{n}\le s\,,
\]
where $m,s,t_i,u_i,v_i,w_i \in \Z^+$ for $i=1,2,\ldots,s$. Bundschuh has $u_i=w_i=1$ for $i=1,2,\ldots,s$.
Using the equivalence transformation \eqref{equivtransform} with 
$e_n=u_{\overline{n}}w_{\overline{n}}$ 
we get the generalized continued fraction representation $\tau=\K_{n=1}^\infty \frac{a_n}{b_n}$ with
positive integer partial numerators
$a_1=u_1w_1$ and
$a_n=u_{\overline{n-1}}\,w_{\overline{n-1}}\,u_{\overline{n}}\,w_{\overline{n}}$ for $n\in\Z^+$, $n>1$ and denominators
$b_n=t_{\overline{n}}w_{\overline{n}}+u_{\overline{n}}v_{\overline{n}}\lceil\frac{n}{s}\rceil^m $ for $n\in \Z^+$.
Now we have
\[
1 \le a_n\le \underset{1\le \overline{n}\le s}
\max\{u_{\overline{n}}\,w_{\overline{n}}\,u_{\overline{n+1}}\,w_{\overline{n+1}}\}
\]
and
\[
\frac{n^m}{s^m} \le b_n\le \underset{1\le \overline{n}\le s}
\max\{t_{\overline{n}}\,w_{\overline{n}}+v_{\overline{n}}\,u_{\overline{n}}\}\frac{(1+s)^mn^m}{s^m}
\]
meaning that $\mu_I(\tau)=2$ by Theorem \ref{poly} with $l=0$ and $k_1=k_2=m$. 
\end{Example}

\section{Exponential growth}

\begin{Theorem}\label{exp}
Let $(a_n)$ and $(b_n)$ be sequences of positive integers and let $\tau=\K_{n=1}^\infty\frac{a_n}{b_n}$.
Suppose 
\[
a_n\le r\alpha^{n^l}\,, \qquad s_1\beta_1^{n^{k_1}}\le b_n\le s_2\beta_2^{n^{k_2}}
\]
for all $n\in\Z^+$, where $l$ is a non-negative real number and
$r$, $s_1$, $s_2$, $\alpha$, $\beta_1$, $\beta_2$, $k_1$ and $k_2$ are positive real numbers satisfying $k_1+1>k_2\ge k_1$.
\begin{enumerate}
\item If $l<k_1$ then $\tau$ is irrational and $\mu_I(\tau)=2$.
\item If $l=k_1$ and $\alpha<\beta_1$ then $\tau$ is irrational and 
\[
\mu_I(\tau)\le 2+\frac{\log\alpha}{\log\beta_1-\log\alpha}\,.
\]
\end{enumerate} 
\end{Theorem}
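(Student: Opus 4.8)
The plan is to verify the two hypotheses of Lemma \ref{Theorem3.1} — namely $\lim_{n\to\infty}\log B_{n+1}/\log B_n=1$ and $\limsup_{n\to\infty}\log\Pi_n/\log B_n\le\nu<1$, with $\nu=0$ in case (1) and $\nu=\log\alpha/\log\beta_1$ in case (2) — and then read off \eqref{ASYPIRRMEASURE}; in case (1) the bound $\mu_I(\tau)\le 2$ combined with $\mu_I\ge 2$ for irrationals forces $\mu_I(\tau)=2$. I would record at the outset the harmless assumption $\beta_1>1$ (needed for the lower bound on $b_n$ to be genuinely exponential) and $\alpha\ge 1$, the latter being automatic in case (2) since the $a_n$ are positive integers and $l=k_1>0$. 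Throughout I would use the elementary comparison $\frac{n^{\kappa+1}}{\kappa+1}\le\sum_{k=1}^n k^{\kappa}\le\frac{(n+1)^{\kappa+1}}{\kappa+1}$, valid for every $\kappa\ge 0$ by comparing with $\int x^\kappa\,dx$, so that $\sum_{k=1}^n k^{\kappa}=\frac{n^{\kappa+1}}{\kappa+1}(1+o(1))$.

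First I would bound $B_n$ from below. Since $B_n\ge b_nB_{n-1}$ for every $n\ge 1$, iterating down to $B_0=1$ gives $B_n\ge\prod_{k=1}^n b_k\ge s_1^{\,n}\beta_1^{\sum_{k=1}^n k^{k_1}}$, hence
\[
\log B_n\ \ge\ n\log s_1+(\log\beta_1)\sum_{k=1}^n k^{k_1}\ =\ \frac{\log\beta_1}{k_1+1}\,n^{k_1+1}(1+o(1)),
\]
so in particular $\log B_n\gg n^{k_1+1}$. For the reverse direction I would only need the crude estimate $B_{n+1}=b_{n+1}B_n+a_{n+1}B_{n-1}\le(b_{n+1}+a_{n+1})B_n$, which gives $0<\log B_{n+1}-\log B_n\le\log(b_{n+1}+a_{n+1})=O(n^{k_2})$ (using $b_{n+1}\le s_2\beta_2^{(n+1)^{k_2}}$, $a_{n+1}\le r\alpha^{(n+1)^l}$ and $l\le k_1\le k_2$). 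Dividing by $\log B_n\gg n^{k_1+1}$ and using that $(B_n)$ is increasing,
\[
1<\frac{\log B_{n+1}}{\log B_n}\le 1+O\!\left(n^{\,k_2-k_1-1}\right)\longrightarrow 1,
\]
the exponent being negative precisely because $k_2<k_1+1$. This settles the first hypothesis of Lemma \ref{Theorem3.1} in both cases.

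Next I would estimate $\Pi_n=\prod_{k=1}^n a_k\le r^{\,n}\alpha^{\sum_{k=1}^n k^{l}}$, so $\log\Pi_n\le n\log r+(\log\alpha)\sum_{k=1}^n k^{l}$. In case (1), where $l<k_1$, this is $O(n^{l+1})$ (with the convention $n^{l+1}\ge n$), while $\log B_n\gg n^{k_1+1}$ and $l+1<k_1+1$; hence $\log\Pi_n/\log B_n\to 0$, Lemma \ref{Theorem3.1} applies with $\nu=0$, $\tau$ is irrational and $\mu_I(\tau)\le 2$, whence $\mu_I(\tau)=2$. In case (2), where $l=k_1$, I would write
\[
\frac{\log\Pi_n}{\log B_n}\ \le\ \frac{n\log r+(\log\alpha)\sum_{k=1}^n k^{k_1}}{n\log s_1+(\log\beta_1)\sum_{k=1}^n k^{k_1}},
\]
and, dividing numerator and denominator by $\sum_{k=1}^n k^{k_1}\to\infty$ and using $n/\sum_{k=1}^n k^{k_1}\le(k_1+1)n^{-k_1}\to 0$ (here $k_1>0$ is used), conclude that the right-hand side tends to $\log\alpha/\log\beta_1$. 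Since $1\le\alpha<\beta_1$ gives $\nu:=\log\alpha/\log\beta_1\in[0,1)$, Lemma \ref{Theorem3.1} now yields irrationality together with $\mu_I(\tau)\le 2+\frac{\nu}{1-\nu}=2+\frac{\log\alpha}{\log\beta_1-\log\alpha}$, as claimed.

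No step is genuinely hard; the one place demanding care is the matching of growth orders — recognizing that $\log B_n$ grows like $n^{k_1+1}$ whereas the increment $\log B_{n+1}-\log B_n$ is only of order $n^{k_2}$, which is exactly why the hypothesis $k_1+1>k_2$ enters, and then checking that the linear terms $n\log r$, $n\log s_1$ are negligible against $\sum_{k=1}^n k^{k_1}$ in case (2). The conditions ``$l<k_1$'' and ``$l=k_1$ with $\alpha<\beta_1$'' are used only to push the relevant $\limsup$ strictly below $1$.
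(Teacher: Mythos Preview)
Your proof is correct and follows essentially the same route as the paper's own proof: both iterate $B_n\ge b_nB_{n-1}$ to get $\log B_n\ge n\log s_1+(\log\beta_1)\sum_{k\le n}k^{k_1}$, both control $\log B_{n+1}-\log B_n$ via $B_{n+1}\le(b_{n+1}+a_{n+1})B_n$ and use $k_2<k_1+1$ to make the ratio tend to $1$, and both feed the resulting $\limsup\log\Pi_n/\log B_n$ into Lemma \ref{Theorem3.1}. Your only addition is making explicit the implicit hypotheses $\beta_1>1$ and (in case (2)) $\alpha\ge 1$, which the paper's argument also needs but does not spell out.
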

\begin{proof}
Let us denote $\Sigma_h(n)=1^h+2^h+\cdots+n^h$. By integrating we get
\[
\frac{n^{h+1}}{h+1} < \Sigma_h(n) < \frac{(n+1)^{h+1}}{h+1}\,.
\]
Then
\[
B_n>b_nB_{n-1}\ge s_1\beta_1^{n^{k_1}}B_{n-1}>
\cdots> s_1^n\beta_1^{\Sigma_{k_1}(n)}
\]
and
\begin{align*}
1& < \frac{\log B_{n+1}}{\log B_n}=\frac{\log (b_{n+1}B_n+a_{n+1}B_{n-1})}{\log B_n}
<\frac{\log ((s_2\beta_2^{(n+1)^{k_2}}+r\alpha^{(n+1)^l})B_n)}{\log B_n}\\
&\le \frac{\log(s_2+r)+(n+1)^{k_2}\log (\max\{\alpha,\beta_2\})}{n\log s_1+\Sigma_{k_1}(n)\log \beta_1}+1 \\
& < \frac{\log(s_2+r)+(n+1)^{k_2}\log (\max\{\alpha,\beta_2\})}{n\log s_1+\frac{n^{k_1+1}}{k_1+1}\log \beta_1}+1 
\longrightarrow 1
\end{align*}
so $\displaystyle\lim_{n\to\infty}\frac{\log B_{n+1}}{\log B_n}=1$. Further
\begin{align*}
\frac{\log\Pi_n}{\log B_n}&\le\frac{\log(r^n\alpha^{\Sigma_l(n)})}{\log(s_1^n\beta_1^{\Sigma_{k_1}(n)})}=\frac{n\log r+\Sigma_l(n)\log\alpha}{n\log s_1+\Sigma_{k_1}(n)\log\beta_1} \\
& \longrightarrow
\begin{cases}
0\,,\quad & \text{ when } l < k_1\,, \\
\frac{\log\alpha}{\log \beta_1}\,, \quad & \text{ when } l=k_1\,. 
\end{cases}
\end{align*}
Hence the result follows from Lemma \ref{Theorem3.1}.
\end{proof}

\begin{Example}\label{rogersram}
Let us consider the Rogers-Ramanujan continued fraction
\[
RR(q,t)=\frac{qt}{1}\alaplus\frac{q^2t}{1}\alaplus\frac{q^3t}{1}\alaplus\aladots\,,
\]
where $q=\frac{a}{b}$, $t=\frac{r}{s}$ and $a,b,r,s\in\Z^+$. Using transformation \eqref{equivtransform} with
\[
e_n=
\begin{cases}
b^{\frac{n+1}{2}}s\,\quad & \text{ when $n$ is odd,}\\
b^{\frac{n}{2}}\,, \quad & \text{ when $n$ is even}
\end{cases}
\]
we get
\begin{align*}
RR(q,t)&=\frac{qt}{1}\alaplus\frac{q^2t}{1}\alaplus\frac{q^3t}{1}\alaplus\frac{q^4t}{1}\alaplus\aladots=
\frac{\frac{a}{b}\frac{r}{s}}{1}\alaplus\frac{(\frac{a}{b})^2\frac{r}{s}}{1}\alaplus
\frac{(\frac{a}{b})^3\frac{r}{s}}{1}\alaplus\frac{(\frac{a}{b})^4\frac{r}{s}}{1}\alaplus\aladots \\
&=\frac{ar}{bs}\alaplus\frac{a^2r}{b}\alaplus\frac{a^3r}{b^2s}\alaplus\frac{a^4r}{b^2}\alaplus\aladots=
\K_{n=1}^\infty \frac{a_n}{b_n}
\end{align*}
where $a_n=a^nr$ and
\[
b_n=\begin{cases}
b^{\frac{n+1}{2}}s \,,\quad & \text{ when $n$ is odd,}\\
b^{\frac{n}{2}} \,, \quad & \text{ when $n$ is even.}
\end{cases}
\]
Hence $a_n\leq ra^n$ and $(\sqrt{b})^n\le b_n\le s\sqrt{b}(\sqrt{b})^n$. When $a^2<b$ Theorem \ref{exp} gives
\[
\mu_I(RR(q,t))\le 2+\frac{2\log a}{\log b-2\log a}\,.
\]
In particular, when $a=1$ we get $\mu_I(RR(\frac{1}{b},t))=2$ for all $b\in\Z^+$, $b \ge 2$,
a result proved already by  Bundschuh \cite{bundschuh1970}.
For more general approximation measures for $q$-continued fractions, see \cite{matmer}. 
In \cite{matmer} we may find an example 
\[
M(q)=\K_{n=1}^\infty\frac{q^{2n}}{1+q^n}
\]
where $q=\frac{a}{b}$, $a,b\in\Z^+$. Similarily to the previous example we get
\[
\mu_I(M(q))\le 2+\frac{2\log a}{\log b-2\log a}
\]
when $a^2<b$.
\end{Example}

\begin{Example} Next we shall consider Tasoev's continued fractions previously studied e.g.
in \cite{komatsu2003}, \cite{komatsu2005} and \cite{matmer}. We define
\begin{align*}
T_1(u,v,a)&=\frac{1}{ua}\alaplus\frac{1}{va^2}\alaplus\frac{1}{ua^3}\alaplus\frac{1}{va^4}\alaplus\aladots\,,\\
T_2(u,v,a,b)&=\frac{1}{ua}\alaplus\frac{1}{vb}\alaplus\frac{1}{ua^2}\alaplus\frac{1}{vb^2}\alaplus\frac{1}{ua^3}\alaplus\aladots\,,
\end{align*}
where $u$, $v$, $a$ and $b$ are positive rational numbers. Denote $a=\frac{x}{y}$ and $b=\frac{s}{t}$, $x,y,s,t \in \Z^+$. With a little calculation we get such representations for $T_1$ and $T_2$ that we can apply Theorem \ref{exp}. When $y^2<x$ we have
\[
\mu_I(T_1)\le 2+\frac{2\log y}{\log x-2\log y}\,.
\]
When $yt<\min\{x,s\}$ we have
\[
\mu_I(T_2)\le 2+\frac{\log y+\log t}{\log\min\{x,s\}-\log y-\log t}\,.
\]
\end{Example}

\begin{Example}\label{Tribonacci}
The Tribonacci sequence may be defined by the initial values $T_0=0, T_1=0, T_2=1$ and
the recursive formula $T_{n+3}=T_{n+2}+T_{n+1}+T_{n}$, $n\in\N$.
When $1\le l<k$ the continued fraction
\[
\tau=\K_{n=1}^\infty\frac{T_{n^l}}{T_{n^k}}
\]
has the asymptotic irrationality exponent $\mu_I(\tau)=2$ by Theorem \ref{exp}.
\end{Example}

\end{document}